\numberwithin{equation}{section}
\newtheorem{thm}{Theorem}[section]
\newcommand{\R}{\mathbb{R}}
\renewcommand{\a}{\alpha}
\renewcommand{\b}{\beta}
\newcommand{\p}{\partial}
\newcommand{\bc}{\begin{cor}}
\newcommand{\ec}{\end{cor}}
\newcommand{\bl}{\begin{lem}}
\newcommand{\el}{\end{lem}}
\newcommand{\bp}{\begin{prop}}
\newcommand{\ep}{\end{prop}}
\newcommand{\bt}{\begin{thm}}
\newcommand{\et}{\end{thm}}
\newcommand{\bal}{\begin{array}{ll}}
\newcommand{\ba}{\begin{array}}
\newcommand{\bac}{\begin{array}{ccc}}
\newcommand{\ea}{\end{array}}
\newcommand{\be}{\begin{equation}}
\newcommand{\ee}{\end{equation}}
\newtheorem{lem}[thm]{Lemma}
\newtheorem{cor}[thm]{Corollary}
\newtheorem{prop}[thm]{Proposition}
\begin{document}
\title {\Large\bf{The Pointwise Estimates of Solutions for Semilinear Dissipative
Wave Equation
   }}

\author { Yongqin Liu{\footnote{email:~yqliu2@yahoo.com.cn}}\\
 {\footnotesize\emph{$\begin{array}{ll}&Department of Mathematics, Fudan University, Shanghai, China\\&Department of Mathematics, Kyushu University, Fukuoka,
 Japan\end {array}$}}\\}
\date{}
\maketitle
\begin{abstract}

 In this paper we focus on the global-in-time existence and the
  pointwise  estimates of solutions to  the initial value  problem for
 the semilinear dissipative wave equation in multi-dimensions. By using the method of Green function combined
 with the energy estimates, we obtain the   pointwise  decay estimates of solutions to the problem.
 \\
\indent {\bf keywords}:~semilinear dissipative wave equation,
pointwise estimates, Green function. \\
\indent {\bf MSC(2000)}:  ~35E15; ~35L15.
\end{abstract}

\section{Introduction}
In this paper we consider the initial value problem for the semilinear dissipative wave
equation in $n(n\geq1)$ dimensions,
\begin{equation}
\label{1a} 
(\Box+\partial_t) u(x,t)=f(u),\ x\in{\mathbb{R}^n},\ t>0,
\end{equation}
with initial condition
\begin{equation}\label{IC}
(u,\partial_tu)(x,0)=(u_0,u_1)(x),\ 
x\in{\mathbb{R}^n},
\end{equation}
where $\Box+\partial_t=\partial_t^2-\triangle_x+\partial_t$ is the
dissipative wave operator with Laplacian
$\triangle_x=\sum\limits_{j=1}\limits^{n}\partial_{x_j}^2,$
$f(u)=-|u|^{\theta}u$, $\theta>0$ is an integer. Equation (\ref{1a}) is often
called the semilinear dissipative wave equation or semilinear
telegraph equation.

There have been many results on the equation (\ref{1a}) and its
variants corresponding to the different forms of $f(u)$. By
employing the weighted $L^2$  energy method and the explicit formula
of solutions, Ikehata, Nishihara and Zhao \cite{INZ} obtained that
the behavior of solutions to (\ref{1a}) as $t\rightarrow \infty$ is
expected to be same as that for the corresponding heat equation,
Nishihara \cite{Ni0} studied the global asymptotic behaviors in
three and four dimensions,  and Nishihara and Zhao \cite{NZ}
obtained the decay properties of solutions to the problem
\eqref{1a}\eqref{IC}.
 Kawashima,
Nakao and Ono \cite{KNO} studied the decay property of solutions to
(\ref{1a}) by using the energy method combined with $L^p-L^q$
estimates, and Ono \cite{Ono1} derived sharp decay rates in the
subcritical case of solutions to (\ref{1a}) in unbounded domains in
$\mathbb{R}^N$ without the smallness condition on initial data.
Also, recently Nishihara, etc.  in \cite {Ni1, Ni2} studied the following semilinear damped wave equations  with time or space-time
 dependent damping term,
\begin{equation}\label{n1}
u_{tt}-\Delta u+b(t)u_t+|u|^{\rho-1}u=0,
\end{equation}
and 
\begin{equation}\label{n2}
u_{tt}-\Delta u+b(t,x)u_t+|u|^{\rho-1}u=0,
\end{equation}
where $\rho>1,$ $b(t)=b_0(1+t)^{-\beta}$ with $b_0>0, -1<\beta<1$, and $b(t,x)=b_0(1+
|x|^2)^{-{\alpha\over2}}(1+t)^{-\beta}$ with $b_0>0, \alpha\geq0, \beta\geq0, \alpha+\beta\in [0,1)$,
  and obtained the global existence and the $L^2$ decay rate of the solution by using the weighted energy method. \eqref{n1} and \eqref{n2} with the exponents $\a=\b=0$ yield \eqref {1a}. For
 studies on the case $f(u)=|u|^{\theta}u$, see \cite{HO, IMN, IO,
Na, Ni, NO}, for studies on the case $f(u)=|u|^{\theta+1}$, see
\cite{Ik, LZ, Ono2, Ono3, TY, Z}, and for studies on the global
attractors, see \cite{BP, KS} and the references cited there.

The main purpose of this paper is to study the pointwise estimates
of  solutions for \eqref{1a}\eqref{IC}.   In \cite{LW}, Liu and Wang studied the corresponding linear problem, i.e. \eqref{1a} with $f(u)=0$ and \eqref{IC}, and obtained the pointwise estimates of solutions. In this paper, we first obtain the global-in-time
solutions by energy method combined with the fixed point theorem of
Banach, and then obtain the optimal pointwise decay estimates of
the solutions  by using the properties of the Green function proved in \cite{LW} combined with
Fourier analysis. One point worthy to be mentioned is that, different from that for solutions to the corresponding linear problem, the order of derivatives with respect to time variable $t$ of solutions does not contribute to the decay rate of solutions due to the presence of the semilinear term, which could be seen from \eqref{pe1} in Theorem \ref{pe} and \eqref{lr1} in Theorem \ref{lr}.

The rest of the paper is arranged as follows. In section 2, the main results are stated.  We give the
 proof of Proposition \ref{ae} and then obtain the global-in-time existence of solutions in section 3. In section 4 we give estimates on
 the Green function by Fourier analysis which will be used in the last section  where the proof of Theorem \ref{pe} is given.

Before closing this section, we give some notations to be used 
below. Let $\mathcal{F}[f]$ denote the Fourier transform of $f$ 
defined by 
$$
\mathcal{F}[f](\xi)=\hat{f}(\xi)
:=\int_{\R^n}e^{-ix\cdot\xi}f(x)dx,
$$
and we denote its inverse transform by $\mathcal{F}^{-1}$. 

For $1\leq p\leq\infty$, $L^p=L^p(\R^n)$ is the usual Lebesgue space 
with the norm $\|\cdot\|_{L^p}$. 
Let $s$ be a nonnegative integer. 
Then $H^s=H^s(\R^n)$ denotes the Sobolev space of $L^2$ functions, 
equipped with the norm 
$$
\|f\|_{H^s}
:=\Big(\sum\limits_{k=0}\limits^{s}
\|\partial_x^kf\|_{L^2}^2\Big)^{1\over2}.
$$
In particular, we use $\|\cdot\|=\|\cdot\|_{L^2}$, $\|\cdot\|_s=\|\cdot\|_{H^s}.$
Here, for a multi-index $\alpha$, $D_x^{\alpha}$ denotes the totality 
of all the $|\alpha|$-th order derivatives with respect to 
$x\in{\mathbb R}^n$. 
Also, $C^k(I; H^s(\R^n))$ denotes the space of $k$-times 
continuously differentiable functions on the interval $I$ 
with values in the Sobolev space $H^s=H^s(\R^n)$. 

Finally, in this paper, we denote every positive constant by the 
same symbol $C$ or $c$ without confusion. 
$[\,\cdot\,]$ is Gauss' symbol. 

\section{Main theorems}\ 
The first main result is about the global existence of solutions to the initial value problem \eqref{1a}\eqref{IC}.
\begin{thm}[Global existence]
\label{ge}  \ Let $\theta>0$ be an integer. Assume that  $(u_0, u_1)\in H^{s+1}(\R^n)\times H^{s}(\R^n), \ s\geq[{n\over2}]$ ,
put
$$
E_0:=\|u_0\|_{H^{s+1}}+\|u_1\|_{H^{s}}.
$$
Then if $E_0$ is suitably small, 
\eqref{1a} \eqref {IC} admits a unique solution 
$$
u\in
\bigcap\limits^{s+1}\limits_{i=0} C^i([0,\infty);
H^{s+1-i}(\mathbb{R}^n)) ,
$$ 
which satisfies
\begin{equation}\label{ge1}
\sum\limits_{i=0}\limits^{s+1}\|\partial_t^iu(
t)\|^2_{s+1-i}+\int^t_0(\|\nabla
u(\tau)\|^2_{s}+\sum\limits_{i=1}\limits^{s+1}\|\partial_{\tau}^iu(\tau)\|^2_{s+1-i})d\tau\leq
CE_0^2.
\end{equation}
\end{thm}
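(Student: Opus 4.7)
The plan is the standard small-data global existence scheme: combine a short-time existence result with an a priori energy estimate that closes under the smallness of $E_0$. Short-time existence in $\bigcap_{i=0}^{s+1}C^i([0,T];H^{s+1-i}(\R^n))$ is produced by an iteration scheme in which $u^{(k+1)}$ solves the linear dissipative wave equation $(\Box+\pt)v=f(u^{(k)})$ with the prescribed data; because $s+1>n/2$, the Sobolev embedding $H^{s+1}\hookrightarrow L^\infty$ ensures that $u\mapsto-|u|^\theta u$ is locally Lipschitz in $H^{s+1}$, so the Banach fixed point theorem in $C([0,T];H^{s+1})\cap C^1([0,T];H^s)$ for $T$ small, combined with the standard energy estimate for the linear problem, gives local existence. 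The theorem then reduces to the a priori bound \eqref{ge1}, presumably the content of Proposition \ref{ae} cited in Section~3.

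For that a priori estimate I would apply $\pt^j\p_x^\a$ to \eqref{1a} for $j+|\a|\leq s+1$ with $j\leq s+1$, and pair the result with the multiplier $\pt^{j+1}\p_x^\a u+\lambda\,\pt^j\p_x^\a u$ for a small $\lambda>0$. The first multiplier produces the dissipative identity
$$\tfrac12\tfrac{d}{dt}\bigl(\|\pt^{j+1}\p_x^\a u\|^2+\|\nabla\pt^j\p_x^\a u\|^2\bigr)+\|\pt^{j+1}\p_x^\a u\|^2 =\bigl(\pt^j\p_x^\a f(u),\,\pt^{j+1}\p_x^\a u\bigr),$$
while the $\lambda$-multiplier, after an integration by parts in $x$, contributes a positive $\lambda\|\nabla\pt^j\p_x^\a u\|^2$ against a controllable $-\lambda\|\pt^{j+1}\p_x^\a u\|^2$ plus lower-order time-boundary terms. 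Choosing $\lambda$ small (uniformly in $j,\a$), summing over $(j,\a)$, and using the equation $\pt^2u=\Delta u-\pt u+f(u)$ itself to trade pure time derivatives for mixed ones yields
$$N(t)^2+\int_0^t D(\tau)^2\,d\tau\leq CE_0^2+C\int_0^t\sum_{j+|\a|\leq s}\bigl|\bigl(\pt^j\p_x^\a f(u),\pt^{j+1}\p_x^\a u\bigr)\bigr|\,d\tau,$$
where $N(t)^2$ and $D(t)^2$ are exactly the quantities on the left of \eqref{ge1}.

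The nonlinearity is handled by a Moser-type estimate combined with $H^{s+1}\hookrightarrow L^\infty$; since $f(u)=-|u|^\theta u$ has polynomial growth of order $\theta+1$,
$$\sum_{j+|\a|\leq s}\|\pt^j\p_x^\a f(u)\|\leq C\|u\|_{L^\infty}^\theta N(t)\leq CN(t)^{\theta+1}.$$
Inserting this and applying Cauchy--Schwarz in $\tau$ gives
$$N(t)^2+\int_0^t D(\tau)^2\,d\tau\leq CE_0^2+C\bigl(\sup_{[0,t]}N\bigr)^\theta\int_0^t D(\tau)^2\,d\tau.$$
If $E_0$ is small enough that a continuity (bootstrap) argument keeps $CN(\tau)^\theta\leq\tfrac12$ on the interval of existence, the last term is absorbed into the left and \eqref{ge1} follows. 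Combined with the local existence above, this a priori bound extends the solution to $[0,\infty)$.

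The main obstacle I anticipate is the bookkeeping at the highest order: one has to choose $\lambda$ small uniformly in $(j,\a)$ so that the summed identity genuinely produces dissipation of every component of $D(t)$, and one must verify that the Moser estimate for the mixed derivatives $\pt^j\p_x^\a f(u)$ (which by the chain rule expands into sums of products of derivatives of $u$ of total order at most $j+|\a|$) really closes within $N(t)$. Once this algebra is in place, the bootstrap step is routine.
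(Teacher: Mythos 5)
Your overall scheme (local existence by Banach's fixed point theorem, an a priori energy estimate, continuation) is the same as the paper's, and your multiplier $\pt^{j+1}\p_x^\a u+\lambda\,\pt^j\p_x^\a u$ is essentially the paper's combination of the $u_t$-multiplier and the $u$-multiplier. The gap is in your treatment of the nonlinearity at the lowest order. The $(j,\a)=(0,0)$ contribution to your error term is
$$
\int_0^t\bigl|(f(u),u_\tau)\bigr|\,d\tau\;\leq\;\int_0^t\|u\|_{L^\infty}^{\theta}\,\|u\|_{L^2}\,\|u_\tau\|_{L^2}\,d\tau,
$$
and the factor $\|u\|_{L^2}$ belongs to $N(\tau)$ but \emph{not} to $D(\tau)$: the dissipation functional on the left of \eqref{ge1} contains $\|\nabla u\|_{s}^2$ and $\|\pt^i u\|^2_{s+1-i}$ for $i\geq1$, but not $\|u\|_{L^2}^2$. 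Consequently this term is only bounded by $C(\sup N)^{\theta+1}\int_0^t\|u_\tau\|\,d\tau\leq C(\sup N)^{\theta+1}\,t^{1/2}\bigl(\int_0^tD^2\,d\tau\bigr)^{1/2}$, which grows in $t$, so your claimed inequality $N^2+\int_0^tD^2\leq CE_0^2+C(\sup N)^{\theta}\int_0^tD^2$ does not follow and the bootstrap does not close. This is not a bookkeeping issue at the highest order, as you anticipated, but a structural failure at the lowest order.

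The paper avoids it by never putting absolute values around the lowest-order nonlinear pairing: since $f(u)=-|u|^{\theta}u$,
$$
(f(u),u_t)=-\frac{1}{\theta+2}\frac{d}{dt}\int_{\R^n}|u|^{\theta+2}\,dx,\qquad (f(u),u)=-\int_{\R^n}|u|^{\theta+2}\,dx\leq 0,
$$
so these terms carry a favorable sign and are moved to the left-hand side (this is how \eqref{3a0} and \eqref{3b} are obtained, and it is exactly the point of Remark~1: for $f(u)=+|u|^{\theta}u$ the theorem fails because this lower-order term cannot be controlled). The Moser-type product estimate of Lemma \ref{L}, together with the a priori smallness of $\|u\|_{L^\infty}$, is used only for $1\leq|\a|\leq s$ and for the time-differentiated equation, where the resulting factor $\|D_x^\a u\|$ (respectively a factor carrying at least one derivative of $u$) does lie in the dissipation and can be absorbed. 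To repair your argument, treat the $(j,\a)=(0,0)$ term via this exact-derivative/sign structure rather than via Cauchy--Schwarz; the rest of your outline then goes through.
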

Theorem \ref{ge} is  proved by combining the local existence of solutions stated in the following Theorem \ref{le}  with a priori estimate in the following Proposition \ref{ae}.

\begin{thm} [Local existence]\label{le}\
Let $\theta>0$ be an integer.
 Assume that $(u_0, u_1)\in H^{s+1}(\R^n)\times H^{s}(\R^n), \  s\geq[{n\over2}]$,  then there exists $T>0$ and a unique solution to
\eqref{1a} \eqref{IC} satisfying  $$u\in \bigcap\limits^{s+1}\limits_{i=0}
C^i([0,T); H^{s+1-i}(\mathbb{R}^n)) .$$
\end{thm}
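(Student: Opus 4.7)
My plan is a Banach fixed-point argument in the energy space
\[
X_T := \bigcap_{i=0}^{s+1} C^i([0,T]; H^{s+1-i}(\R^n)), \qquad \|u\|_{X_T} := \sup_{0\le t\le T}\Bigl(\sum_{i=0}^{s+1}\|\partial_t^i u(t)\|_{s+1-i}^2\Bigr)^{1/2},
\]
carried out on the closed ball $X_{T,R}:=\{u\in X_T : \|u\|_{X_T}\le R,\ (u,\partial_t u)(0)=(u_0,u_1)\}$, with $R$ of order $E_0+1$ and $T>0$ to be chosen small. Given $v\in X_{T,R}$, define $\Phi(v)=u$ as the unique solution of the \emph{linear} Cauchy problem
\[
(\Box+\partial_t)u = f(v),\qquad (u,\partial_t u)|_{t=0}=(u_0,u_1),
\]
whose solvability is classical for the dissipative wave operator (and is implicit in the Green-function framework of \cite{LW}).

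The main tool is the standard energy identity: multiplying the linear equation by $2\partial_t D_x^\alpha u$ for each $|\alpha|\le s$, summing and absorbing the dissipation gives
\[
\frac{d}{dt}\bigl(\|\partial_t u\|_s^2+\|\nabla u\|_s^2\bigr)+\|\partial_t u\|_s^2 \le C\|f(v)\|_s^2,
\]
while $\|u(t)\|$ itself is controlled through $u(t)=u_0+\int_0^t\partial_\tau u\,d\tau$. To reach the higher-order time derivatives $\partial_t^i u$ for $2\le i\le s+1$, I would differentiate the equation in $t$ and iterate the same identity, using
\[
\partial_t^{i+2}u=\Delta\partial_t^i u-\partial_t^{i+1} u+\partial_t^i f(v)
\]
to trade temporal for spatial derivatives. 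The nonlinearity is handled by a Sobolev-algebra/Moser inequality: since $s\ge[n/2]$ forces $s+1>n/2$, one has $H^{s+1}(\R^n)\hookrightarrow L^\infty$ and $H^{s+1}$ is a Banach algebra, so
\[
\|f(v)\|_s\le C\|v\|_{L^\infty}^\theta\|v\|_s\le C\|v\|_{X_T}^{\theta+1},
\]
and similarly $\|\partial_t^i f(v)\|_{s-i}\le C\|v\|_{X_T}^{\theta+1}$ via the chain rule and multilinear Moser estimates.

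Combining these estimates yields a self-map bound of the form $\|\Phi(v)\|_{X_T}\le C E_0 + C T^{1/2} R^{\theta+1}$, while the analogous difference estimate for $w=\Phi(v_1)-\Phi(v_2)$, which solves the linear equation with source $f(v_1)-f(v_2)$ and zero initial data, reads
\[
\|\Phi(v_1)-\Phi(v_2)\|_{X_T}\le C T^{1/2} R^{\theta}\|v_1-v_2\|_{X_T},
\]
where one uses $\|f(v_1)-f(v_2)\|_s\le C(\|v_1\|_{X_T}^\theta+\|v_2\|_{X_T}^\theta)\|v_1-v_2\|_{X_T}$. Fixing $R$ of order $E_0+1$ and then $T$ small enough makes $\Phi$ a strict contraction of $X_{T,R}$ into itself; its unique fixed point is the required local solution, and uniqueness in all of $X_T$ (with the same data) follows from the same difference estimate.

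The main technical obstacle I anticipate is propagating the basic energy bound all the way up to the top time derivative $\partial_t^{s+1}u\in L^\infty_T L^2$: this forces one to iterate the equation to swap $\partial_t^2$ for $\Delta$ many times, and then to control quantities such as $\|\partial_t^i(|v|^\theta v)\|_{s-i}$ by multilinear Moser-type estimates with the right distribution of derivatives. These commutator-like nonlinear bounds close precisely under the hypothesis $s\ge[n/2]$, which is exactly what makes that choice of regularity natural.
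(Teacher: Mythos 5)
Your proposal is exactly the argument the paper has in mind: the authors state only that ``the proof of the local existence result is based on the fixed point theorem of Banach and standard argument'' and omit all details, and your contraction-mapping scheme in the energy space $\bigcap_{i=0}^{s+1}C^i([0,T];H^{s+1-i})$, with the linear energy identity, the iteration $\partial_t^{i+2}u=\Delta\partial_t^i u-\partial_t^{i+1}u+\partial_t^i f(v)$ to reach the top time derivative, and the Moser/algebra estimates under $s\ge[n/2]$, is precisely that standard argument made explicit. No discrepancy with the paper's route.
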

The proof of the local existence result is based on the fixed point theorem of Banach and standard argument, so the detail is omitted.

 Based on the  a priori assumption
\begin{equation}\label{aa}
\sup\limits_{0<t<T}\|u(t)\|_{L^{\infty}}\leq \bar{\delta},
\end{equation}
where $s>n$ is  an integer and $\bar{\delta}<1$ is a small constant,
the following a priori estimate  is obtained.
\bp[A priori estimate]\label{ae}
Under the same assumptions as in Theorem \ref{ge}, 
let $u(x,t)$ be the solution to \eqref{1a}\eqref{IC} which is defined on $[0,T]$ and verifies \eqref{aa}, then
the following estimate holds,
\begin{equation}\label{ae1}
\sup\limits_{0<t<T}\{\sum\limits_{i=0}\limits^{s+1}
\|\partial_t^iu(
t)\|^2_{s+1-i}\}+\int^T_0(\|\nabla
u(\tau)\|^2_{s}+\sum\limits_{i=1}\limits^{s+1}\|\partial_{\tau}^iu(\tau)\|^2_{s+1-i})d\tau\leq
CE_0^2.
\end{equation}                                        
\ep
{\bf Remark 1.}
In (\ref{1a}),  $f(u)=-|u|^{\theta}u$ is called absorption term which makes it possible to close energy estimates. Otherwise, if $f(u)=|u|^{\theta}u$, then Theorem \ref{ge} does
not hold, since  the lower-order term present in the 
energy estimates could not be controlled.

The second main result is about the pointwise estimate to the solution obtained in Theorem \ref{ge}.
\begin{thm}[Pointwise estimate]\label{pe}
Under the same assumptions as in Theorem \ref{ge}, if  $s>n,$ $\theta\geq 2+[{1\over n}]$, and for any
multi-indexes $\alpha$, $|\alpha|<s-{n\over2}$, there exists some
constant $r>\max{\{{n\over2}, 1\}}$ such that
$$
|D^{\alpha}_xu_0(x)|+|D^{\alpha}_xu_1(x)|\leq C(1+|x|^2)^{-r},
$$
then for $h\geq0$ satisfying $|\alpha|+h<s-n,$ the
solution  to \eqref{1a}\eqref{IC} obtained in Theorem \ref{ge} satisfies the following pointwise estimate,
\begin{equation}\label{pe1}
|\partial_t^hD_x^{\alpha}u(x,t)|\leq CE_0
(1+t)^{-{{n+|\alpha|}\over2}}(1+{{|x|^2}\over{1+t}})^{-r}.
\end{equation}
\end{thm}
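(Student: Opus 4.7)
The plan is to combine the Duhamel representation of the solution with the pointwise Green function estimates of \cite{LW} and a bootstrap argument. Denoting by $G$ the fundamental solution of the linear dissipative wave equation and by $u_L$ the solution of the linear initial value problem with data $(u_0,u_1)$, I would write
\begin{equation*}
u(x,t) = u_L(x,t) + \int_0^t\!\!\int_{\R^n} G(x-y,\,t-\tau)\, f(u(y,\tau))\, dy\, d\tau.
\end{equation*}
The linear part $u_L$ is handled directly by the pointwise estimate of \cite{LW}: under the polynomial decay hypothesis on $(u_0,u_1)$, this yields
\begin{equation*}
|\partial_t^h D_x^\alpha u_L(x,t)| \leq C E_0\,(1+t)^{-(n+|\alpha|+h)/2}\Bigl(1+\tfrac{|x|^2}{1+t}\Bigr)^{-r},
\end{equation*}
which is at least as strong as the target bound \eqref{pe1}, so only the Duhamel integral requires work.

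Next I would set up a continuity argument based on a weighted sup-norm. Define
\begin{equation*}
M(T) := \sup_{0\leq t\leq T}\ \sup_{x\in\R^n}\ \sum_{|\alpha|+h\leq s-n} (1+t)^{(n+|\alpha|)/2}\Bigl(1+\tfrac{|x|^2}{1+t}\Bigr)^r |\partial_t^h D_x^\alpha u(x,t)|,
\end{equation*}
and aim for the a priori inequality $M(T)\leq CE_0 + CM(T)^{\theta+1}$. Since Theorem \ref{ge} already provides a global smooth solution with small energy and $M(0)\leq CE_0$, once this inequality is established the smallness of $E_0$ yields $M(T)\leq CE_0$ uniformly in $T$, which is \eqref{pe1}.

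The crux is estimating the Duhamel term. From the ansatz $|u(y,\tau)|\leq M(T)(1+\tau)^{-n/2}(1+|y|^2/(1+\tau))^{-r}$, the nonlinearity satisfies
\begin{equation*}
|f(u(y,\tau))|\leq M(T)^{\theta+1}(1+\tau)^{-n(\theta+1)/2}\Bigl(1+\tfrac{|y|^2}{1+\tau}\Bigr)^{-r(\theta+1)},
\end{equation*}
so, using the pointwise bound on $\partial_t^h D_x^\alpha G$ from \cite{LW}, the problem reduces to a weighted convolution estimate
\begin{equation*}
\int_0^t\!\!\int_{\R^n}(1+t-\tau)^{-\frac{n+|\alpha|+h}{2}}\Bigl(1+\tfrac{|x-y|^2}{1+t-\tau}\Bigr)^{-r}(1+\tau)^{-\frac{n(\theta+1)}{2}}\Bigl(1+\tfrac{|y|^2}{1+\tau}\Bigr)^{-r(\theta+1)} dy\, d\tau,
\end{equation*}
which must be bounded by $C(1+t)^{-(n+|\alpha|)/2}(1+|x|^2/(1+t))^{-r}$. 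The standard route is to split $\tau\in[0,t]$ into $[0,t/2]$ and $[t/2,t]$ and to split the $y$-integral into the regions $\{|y|\leq |x|/2\}$ and $\{|y|\geq |x|/2\}$, so that in each piece one of the two spatial weights dominates and can be pulled out; the hypothesis $\theta\geq 2+[{1\over n}]$ is precisely what makes the remaining time integrals converge with the correct rate.

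The main technical obstacle is this weighted convolution estimate: propagating the spatial weight $(1+|x|^2/(1+t))^{-r}$ through the Duhamel integral without losing any decay in either the inner or outer region of the $y$-split. A secondary subtlety is that, for $h\geq 1$, differentiating the Duhamel integral in $t$ produces boundary contributions proportional to (time-derivatives of) $f(u)$ evaluated at $t$, whose temporal decay $(1+t)^{-n(\theta+1)/2}$ is already faster than the target $(1+t)^{-(n+|\alpha|)/2}$ under the hypotheses $|\alpha|+h\leq s-n$ and $\theta\geq 2+[{1\over n}]$; this is exactly why the temporal derivative order $h$ does not enter the final decay rate, as flagged in the introduction.
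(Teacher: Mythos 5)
Your skeleton --- Duhamel representation, the linear part from \cite{LW}, a bootstrap on the weighted quantity $M(T)$, and closure via $M(T)\le C E_0+CM(T)^{\theta+1}$ using $\theta\ge 2$ and smallness of $E_0$ --- is exactly the paper's strategy. However, the step you describe as a ``reduction to a weighted convolution estimate'' hides a genuine gap: the full Green function $G$ of the dissipative wave operator does \emph{not} obey a pointwise kernel bound of the form $|\partial_t^hD_x^{\alpha}G(x,t)|\le C(1+t)^{-(n+|\alpha|+2h)/2}B_r(|x|,t)$. By Proposition \ref{210} such a bound holds only for $\partial_t^hG-K_m^h$; the high-frequency remainder $K_m^h$ is a genuinely singular distribution (an exponentially damped wave propagator composed with kernels containing a multiple of $\delta(x)$), and its convolution with the nonlinearity cannot be controlled by any pointwise bound on the kernel. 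It must be handled through the Kirchhoff formulas of Lemma \ref{25} together with Lemmas \ref{28} and \ref{42}, and in that part of the argument the derivatives $D_y^{\alpha}\partial_\tau^h$ necessarily fall on $|u|^{\theta}u$ rather than on the kernel (this is the term $J_{25}$ in the paper). Your proposal never confronts this contribution, and it is the most delicate part of the proof.

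There is a second, quantitative problem even for the regular part $G-K_m^0$: the convolution integral you display, with all derivatives kept on the kernel, does not produce the claimed rate. On $\tau\in[t/2,t]$ the factor $(1+t-\tau)^{-(n+|\alpha|+h)/2}$ carries no decay in $t$, the $y$-integration of the spatial weight costs a factor $(1+t-\tau)^{n/2}$, and the surviving time decay is then governed solely by $(1+\tau)^{-n(\theta+1)/2}$ with $\tau\sim t$. Since $\theta$ is fixed while $|\alpha|$ may be as large as $s-n-1$ with $s$ arbitrary, this cannot yield $(1+t)^{-(n+|\alpha|)/2}$ for large $|\alpha|$. The missing decay has to be harvested from derivatives of the nonlinearity, which under the bootstrap ansatz contributes $(1+\tau)^{-((\theta+1)n+|\alpha|+2h)/2}$: the paper integrates by parts in $\tau$ so that all $h$ time derivatives land on $|u|^{\theta}u$ inside the integral (with boundary terms at $\tau=0$, see \eqref{J}), and for $\tau\in[t/2,t]$ it also transfers $D_x^{\alpha}$ onto $|u|^{\theta}u$ (terms $J_{23}$, $J_{24}$), keeping the derivatives on the kernel only for $\tau\in[0,t/2]$ where $(1+t-\tau)\sim(1+t)$. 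You gesture at the split $[0,t/2]\cup[t/2,t]$ but do not perform this derivative transfer, and without it the estimate fails. (Minor: the linear rate from \cite{LW} is $(1+t)^{-(n+|\alpha|+2h)/2}$, not $(1+t)^{-(n+|\alpha|+h)/2}$, though either suffices for that term.)
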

{\bf Remark 2.} \ From the estimate in Theorem \ref{pe}, we see that the order of derivatives with respect to $t$ of the solution obtained in Theorem \ref{ge}  has no effect on the decay rate of  the solution, as is different from that for the solution to the corresponding linear problem  studied in \cite{LW}.

As a direct corollary of Theorem \ref{pe} we have
\bc\label{44}
Assume that the same assumptions as in Theorem \ref{pe} hold, then
for $p\in[1,\infty]$, $|\alpha|+h<s-n$, the solution to \eqref{1a}\eqref{IC}
satisfies,
$$
\|\partial_t^hD_x^{\alpha}u(\cdot,t)\|_{L^p}\leq CE_0
(1+t)^{-{n\over2}(1-{1\over p})-{{|\alpha|}\over2}}.
$$
\ec

\section{The global existence of solutions}

First we give a lemma which will be used in our next energy estimates.
\bl\label{L}
Let $n\geq 1$, $1\leq p,q,r\leq\infty$ and 
${1\over p}={1\over q}+{1\over r}$. Then the following estimate holds: 
\be\label{d1} 
\|\p^k_x(uv)\|_{L^p}\leq
C(\|u\|_{L^q}\|\p^k_xv\|_{L^r}+\|v\|_{L^q}\|\p^k_xu\|_{L^r})
\ee
for $k\geq 0$.
\el

\begin{proof}
The estimate \eqref{d1} can be found in a literature but we give here a proof. 
To prove \eqref{d1}, it is enough to show that, for $k_1\geq1$, 
$k_2\geq1$ and $k_1+k_2=k$, the following estimate holds: 
$$
\|\p^{k_1}_xu\,\p^{k_2}_xv\|_{L^p}\leq
C(\|u\|_{L^q}\|\p^k_xv\|_{L^r}+\|v\|_{L^q}\|\p^k_xu\|_{L^r}).
$$
Let $\theta_j={k_j\over k}$, $j=1,2$, and define $p_j$, $j=1,2$, by 
$$
{1\over p_j}-{k_j\over n}
=(1-\theta_j){1\over q}+\theta_j({1\over r}-{k\over n}).
$$ 
Since $\theta_1+\theta_2=1$, we have 
${1\over p}={1\over p_1}+{1\over p_2}$. 
By using the H\"{o}lder inequality and the Gagliardo-Nirenberg 
inequality, we have 
\begin{equation*}
\begin{split}
\|\p^{k_1}_xu\,\p^{k_2}_xv\|_{L^p}\leq &\ 
\|\p^{k_1}_xu\|_{L^{p_1}}\|\p^{k_2}_xv\|_{L^{p_2}}\\[1mm]
\leq &\ 
C(\|u\|^{1-\theta_1}_{L^{q}}\|\p^{k}_xu\|^{\theta_1}_{L^{r}})
(\|v\|^{1-\theta_2}_{L^{q}}\|\p^{k}_xv\|^{\theta_2}_{L^{r}})\\[1mm]
\leq &\ 
C(\|u\|_{L^{q}}\|\p^{k}_xv\|_{L^{r}})^{\theta_2}
(\|v\|_{L^{q}}\|\p^{k}_xu\|_{L^{r}})^{\theta_1}\\[1mm]
\leq &\ 
C(\|u\|_{L^q}\|\p^k_xv\|_{L^r}+\|v\|_{L^q}\|\p^k_xu\|_{L^r}).
\end{split}
\end{equation*}
In the last inequality, we have used the Young inequality. Thus 
\eqref{d1} is proved. 
\end{proof}

Now, let $T>0$ and consider solutions to the problem \eqref{1a}, 
\eqref{IC}, which are defined on the time interval $[0,T]$ and 
verify the regularity mentioned in Proposition \ref{ae}.
 we derive energy estimates under the  a priori assumption \eqref{aa}.

By multiplying $(\ref{1a})$ with $u_t$ and integrating on
$\mathbb{R}^n\times (0, t)$ with respect to $(x, t)$, we get
\begin{equation}\label{3a0}
\|u_t(t)\|^2+\|\nabla
u(t)\|^2+\int_{\mathbb{R}^n}|u|^{\theta+2}(x,t)dx+\int^t_0\|u_{\tau}(\tau)\|^2d\tau
\leq
CE_0^2.
\end{equation}

$\forall \alpha$, $1\leq|\alpha|\leq s$, by multiplying
$D^{\alpha}_x(\ref{1a})$ with $D^{\alpha}_xu_t$ and integrating on
$\mathbb{R}^n\times (0, t)$ with respect to $(x, t)$,
in view of Lemma \ref{L}  and \eqref{aa}\ we get
$$\begin{array}{ll}
&\|D^{\alpha}_xu_t(t)\|^2+\|D^{\alpha}_x\nabla
u(t)\|^2+\int^t_0\|D^{\alpha}_xu_{\tau}(\tau)\|^2d\tau
\\&\\& \leq
CE_0^2+C\int^t_0\|u(\tau)\|_{L^{\infty}}^{2\theta}\|D_x^{\alpha}
u(\tau)\|^2d\tau
\\&\\& \leq
CE_0^2+C\bar{\delta}\int^t_0\|D_x^{\alpha}
u(\tau)\|^2d\tau.
\end{array}
$$

By taking sum for $\alpha$ with $1\leq |\alpha|\leq s$, it yields that
\begin{equation}\label{3a}
\begin{array}{ll}
&\|\nabla u_t(t)\|^2_{s-1}+\|\nabla^2
u(t)\|^2_{s-1}+\int^t_0\|\nabla
u_{\tau}(\tau)\|^2_{s-1}d\tau
\\&\\&\leq
CE_0^2+C\bar{\delta}\int^t_0\|\nabla
u(\tau)\|^2_{s-1}d\tau.
\end{array}
\end{equation}

 By multiplying
$(\ref{1a})$ with $u$ and integrating on $\mathbb{R}^n\times (0,
t)$ with respect to $(x, t)$,  by virtue of \eqref{3a0} we get

\begin{equation}\label{3b}
\begin{array}{ll} &\|u(t)\|^2+\int^t_0\|\nabla
u(\tau)\|^2d\tau+\int^t_0\int_{\mathbb{R}^n}(|u|^{\theta+2})(x,
\tau)dxd\tau\\&\\& \leq
C(E_0^2+\|u_t(t)\|^2+\int^t_0\|u_{\tau}(\tau)\|^2d\tau) \leq CE_0^2.
\end{array}
\end{equation}

$\forall \alpha$, $1\leq|\alpha|\leq s$, by multiplying
$D^{\alpha}_x(\ref{1a})$ with $D^{\alpha}_xu$ and integrating on
$\mathbb{R}^n\times (0, t)$ with respect to $(x, t)$,
by virtue of Lemma \ref{L} we get
$$\begin{array}{ll}
\|D^{\alpha}_xu(t)\|^2+\int^t_0\|D^{\alpha}_x\nabla
u(\tau)\|^2d\tau&\leq
C(E_0^2+\|D^{\alpha}_xu_t(t)\|^2
+\int^t_0\|D^{\alpha}_xu_{\tau}(\tau)\|^2d\tau)
\\&\\&\quad
+C\int_0^t\|u(\tau)\|_{L^{\infty}}^{2\theta}\|D_x^{\alpha} u(\tau)\|^2d\tau.
\end{array}
$$

By taking sum for $\alpha$ with $1\leq |\alpha|\leq s$ and in view of \eqref{3a} and \eqref{aa}, it yields that
\begin{equation}\label{3c}
\|\nabla u(t)\|^2_{s-1}+\int_0^t\|\nabla^2
u(\tau)\|^2_{s-1}d\tau\leq
CE_0^2+C\bar{\delta}\int^t_0\|\nabla u(\tau)\|^2_{s-1}d\tau.
\end{equation}

(\ref{3a0}), (\ref{3a}), (\ref{3b}) and (\ref{3c}) yield that

\begin{equation}\label{3d}
\|u(t)\|^2_{s+1}+\|u_t(t)\|^2_{s}+\int^t_0(\|\nabla
u(\tau)\|^2_{s}+\|u_{\tau}(\tau)\|^2_{s})d\tau\leq
CE_0^2.
\end{equation}
\begin{proof}[Proof of Proposition \ref{ae}]

To prove \eqref{ae1} in Proposition \ref{ae}, it is enough to prove that the following estimate holds for 
$\forall h\in[1,s+1],\  \forall t\in [0,T]$.

\begin{equation}\label{3e}
\sum\limits_{i=0}\limits^h\|\partial_t^iu(t)\|^2_{s+1-i}+\int^t_0(\|\nabla
u(\tau)\|^2_{s}+\sum\limits_{i=1}\limits^h\|\partial_{\tau}^iu(\tau)\|^2_{s+1-i})d\tau\leq
CE_0^2. \end{equation}

It is obvious that (\ref{3e}) holds with $h=1$ by virtue of  (\ref{3d}).
Assume that \eqref{3e} holds with $h=j(1\leq j\leq s)$, next we will prove that \eqref{3e} holds with $h=j+1.$

 From (\ref{1a}), by using induction argument  we could prove that the following two equalities hold for $k\geq1$,
\begin{equation}\label{even}
\begin{array}{ll}
\partial_t^{2k}u(x,t)=&a_{2k}\Delta^ku(x,t)+b_{2k}\Delta^{k-1}u_t(x,t)\\&+P\{\Delta^iu(x,t),\Delta^ju_t(x,t),0\leq i\leq k-1,
0\leq j\leq k-2\},\end{array}
\end{equation}
\begin{equation}\label{odd}
\begin{array}{ll}
\partial_t^{2k+1}u(x,t)=&a_{2k+1}\Delta^ku(x,t)+b_{2k+1}\Delta^{k}u_t(x,t)\\&+P\{\Delta^iu(x,t),\Delta^ju_t(x,t),0\leq i\leq k-1,
0\leq j\leq k-1\},\end{array}
\end{equation}
where $a_{2k}, \ a_{2k+1},\ b_{2k}, \ b_{2k+1}$ are constants,
$P\{\Delta^iu(x,t),\Delta^ju(x,t),0\leq i\leq k-1, 0\leq j\leq
k-2\}$ is a polynomial with arguments
$\Delta^iu(x,t),\Delta^ju(x,t),0\leq i\leq k-1, 0\leq j\leq k-2$.

Let $t=0$ in \eqref{even}\eqref{odd}, we have for $k\geq1$,
\begin{equation}\label{3h}\begin{array}{ll}
\partial_t^{2k}u(x,0)=&a_{2k}\Delta^ku_0(x)+b_{2k}\Delta^{k-1}u_1(x)\\&+P\{\Delta^iu_0(x),\Delta^ju_1(x),0\leq
i\leq k-1, 0\leq j\leq k-2\},\end{array}
\end{equation}
\begin{equation}\label{3i}\begin{array}{ll}
\partial_t^{2k+1}u(x,0)=&a_{2k+1}\Delta^ku_0(x)+b_{2k+1}\Delta^{k}u_1(x)\\&+P\{\Delta^iu_0(x),\Delta^ju_1(x),0\leq i\leq k-1,
0\leq j\leq k-1\}.\end{array}
\end{equation}

 $\forall \alpha$, $|\alpha|\leq s-j$, by multiplying
$D^{\alpha}_x\partial_t^j(\ref{1a})$ with
$D^{\alpha}_x\partial_t^{j+1}u$ and integrating on
$\mathbb{R}^n\times (0, t)$ with respect to $(x, t)$, in view of
(\ref{3e}) with $h=j$, (\ref{3h}) and (\ref{3i}) we get
$$\begin{array}{ll}
&\|D^{\alpha}_x\partial_t^{j+1}u(t)\|^2+\int^t_0\int_{\mathbb{R}^n}
\|D^{\alpha}_x\partial_{\tau}^{j+1}u(\tau)\|^2 d\tau\leq
CE_0^2.
\end{array}
$$

By taking sum for $\alpha$ with $0\leq |\alpha|\leq s-j$, it yields that
\begin{equation}\label{3f}
\|\partial_t^{j+1}u(t)\|^2_{s-j}+\int^t_0\|\partial_{\tau}^{j+1}u(\tau)\|^2_{s-j}d\tau\leq
CE_0^2. 
\end{equation}
(\ref{3e}) with $h=j$  and (\ref{3f}) yield that
\begin{equation}\label{3g}
\sum\limits_{i=0}\limits^{j+1}\|\partial_t^iu(t)\|^2_{s+1-i}+\int^t_0(\|\nabla
u(\tau)\|^2_{s}+\sum\limits_{i=1}\limits^{j+1}\|\partial_{\tau}^iu(\tau)\|^2_{s+1-i})d\tau\leq
CE_0^2.
\end{equation}
It means that \eqref{3e} holds with $h=j+1.$
Thus by induction method, we complete the proof of Proposition \ref{ae}. 
\end{proof}
Now we give the proof of Theorem \ref{ge}.
\begin{proof}[Proof of Theorem \ref{ge}]
By virtue of the a priori estimate \eqref{ae1} in Proposition \ref{ae}, we can continue a unique solution obtained in Theorem \ref{le} globally in time, provided that $E_0$ is 
suitably small, say, $E_0<\delta_0$, $\delta_0$ depends only on $\bar{\delta}$ in \eqref{aa}. The global solution thus obtained satisfies \eqref{ge1} 
and \eqref{aa}. This finishes the proof of Theorem \ref{ge}.
 \end{proof}

\section{Estimates on Green function}
In this section, 
we list some formulas and properties of the Green function obtained in \cite{LW} to make preparation for the next section about the pointwise estimates of 
solutions.

 The  Green function or the fundamental
 solution to the corresponding linear dissipative wave equation (i.e. $f(u)=0$ in (\ref{1a})) to (\ref{1a}) satisfies
$$
\left\{\begin{array}{ll} (\Box+\partial_t)G(x,t)=0,\ &x\in{\mathbb{R}}^n,\ t>0,\\&\\
G(x,0)=0,\ &x\in{\mathbb{R}}^n,\\&\\
\partial_t G(x,0)=\delta(x),\ &x\in{\mathbb{R}}^n.
\end{array}
\right.
$$
 By Fourier transform we get that,
$$
 \left\{\begin{array}{ll}
  (\partial_t^2+\partial_t)\hat{G}(\xi,t)+|\xi|^2\hat{G}(\xi,t)=0,\ &\xi\in{\mathbb{R}}^n,~t>0,\\&\\
\hat{G}(\xi,0)=0,\ &\xi\in{\mathbb{R}}^n,\\&\\
\partial_t \hat{G}(\xi,0)=1,\ &\xi\in{\mathbb{R}}^n.
\end{array}
\right.
$$
The symbol of the operator for equation (\ref{1a}) is
\begin{equation}
\label{2a}\sigma(\Box+\partial_t)= \tau^2+\tau+|\xi|^2,
\end{equation}
$\tau$ and $\xi$ correspond to ${\partial\over{\partial t}}, $ and
${1\over{\sqrt{-1}}}{\partial\over{\partial x_j}},~j=1,2,\cdots,n.$
 It is easy to see that the eigenvalues of
(\ref{2a}) are
$\tau=\mu_{\pm}(\xi)={1\over2}(-1\pm\sqrt{1-4|\xi|^2}).$ By direct
calculation we have that
$$
\hat{G}(\xi,t)=(1-4|\xi|^2)^{-{1\over2}}(e^{\mu_+(\xi)t}-e^{\mu_-(\xi)t}).
$$
For convenience we decompose
$\hat{G}(\xi,t)=\hat{G}^+(\xi,t)+\hat{G}^-(\xi,t),$ where
$$\hat{G}^{\pm}(\xi,t)=\pm\mu_0^{-1}e^{\mu_{\pm}(\xi,t)},~~\mu_0(\xi)=(1-4|\xi|^2)^{1\over2}.$$
Let
$$
\chi_1(\xi)=\left\{ \begin{array}{ll} 1,\
&|\xi|<\varepsilon,\\&\\0,\ &|\xi|>2\varepsilon,
\end{array}\right.
\  \chi_3(\xi)=\left\{ \begin{array}{ll} 1,\ &|\xi|>R,\\&\\0,\
&|\xi|<R-1,
\end{array}\right.
$$
be the smooth cut-off functions, where $\varepsilon$ and $R$ are any
fixed
positive numbers satisfying $2\varepsilon<R-1.$\\\\
Set$$\chi_2(\xi)=1-\chi_1(\xi)-\chi_3(\xi),$$ and
$$\hat{G}^{\pm}_i(\xi,t)=\chi_i(\xi)\hat{G}^{\pm}(\xi,t),~~i=1,2,3.$$
We are going to study $G^{\pm}_i(x,t)$, which is the inverse Fourier
transform corresponding to $\hat{G}^{\pm}_i(\xi,t).$

 Denote $B_N(|x|,t)=(1+{{|x|^2}\over{1+t}})^{-N}$. First we give
two propositions regarding to $G_1(x,t)$ and $ G_2(x,t)$, the proof
can be seen in \cite{LW, WY}.

\bp
\label{21} For sufficiently small $\varepsilon$, there exists
constant $C>0$, and $N>n$ such that
$$
|\partial_t^hD_x^{\alpha}G_1(x,t)|\leq
C_Nt^{-(n+|\alpha|+2h)/2}B_N(|x|,t).
$$
\ep

\bp
\label{22}
 For fixed $\varepsilon$ and $R$, there exist positive numbers $m$
, $C$ and $N>n$ such that
$$
|\partial_t^hD_x^{\alpha}G_2(x,t)|\leq Ce^{-{t\over{2m}}}B_N(|x|,t).
$$
\ep

  Next we will come to consider $G_3(x,t).$
Now we list some lemmas which are useful in dealing with the higher
frequency part.

 \bl
\label{23}
 If ${\rm supp}\ \hat{f}(\xi)\subset O_R:=\{\xi;~|\xi|>R\},$ and
 $\hat{f}(\xi)$ satisfies
 $$|\hat{f}(\xi)|\leq C,~
|D_{\xi}^{\beta}\hat{f}(\xi)|\leq C|\xi|^{-1-|\beta|},~|\beta|\geq1,
 $$
 then there exist distributions $f_1(x),~f_2(x),$ and constant $C_0$
 such that $$
f(x)=f_1(x)+f_2(x)+C_0\delta(x),
 $$
 where $\delta(x)$ is the Dirac function. Furthermore, for positive
 integer $2N>n+|\alpha|,$
 $$
|D_{x}^{\alpha}f_1(x)|\leq C(1+|x|^2)^{-N},
 $$
$$
\|f_2\|_{L^1}\leq C,~~~{\rm{supp}}~ f_2(x)\subset
\{x;~|x|<2\varepsilon_1\},
$$
with $\varepsilon_1$ being sufficiently small.
 \el

  \bl\label{24}
For any $N>0,$ $\tau\geq0,$  we have that

$$
\int_{|z|=1}{{(1+{{|x+tz|^2}\over{1+\tau}})^{-N}}}dS_z\leq
C(1+t)^{2N}(1+{{|x|^2}\over{1+\tau}})^{-N}.
$$
$$
\int_{|z|\leq1}{{(1+{{|x+tz|^2}\over{1+\tau}})^{-N}}\over{\sqrt{1+\tau}}}dV_z\leq
C(1+t)^{2N}(1+{{|x|^2}\over{1+\tau}})^{-N}.
$$
  \el

The proof of Lemma \ref{23} and Lemma \ref{24} can be seen in \cite
{WY}.\\\\
The following Kirchhoff formulas can be seen in \cite {E,HZ}.

 \bl\label{25}
Assume that $w(x,t)$ is the fundamental solution of the following
wave equation with $c=1,$
$$
\left\{\begin{array}{ll} &w_{tt}-c^2\triangle
w=0,\\&\\&w|_{t=0}=0,\\&\\&\partial_t w|_{t=0}=\delta(x).
\end{array}\right.
$$
There are constants $a_{\alpha},~b_{\alpha}$ depending only on the
spatial dimension $n\geq1$ such that, if $h\in
C^{\infty}({\mathbb{R}}^n)$, then
$$
(w\ast
h)(x,t)=\sum\limits_{0\leq|\alpha|\leq{{n-3}\over2}}a_{\alpha}t^{|\alpha|+1}\int_{|z|=1}D^{\alpha}h(x+tz)z^{\alpha}dS_z,
$$
$$
(w_t\ast
h)(x,t)=\sum\limits_{0\leq|\alpha|\leq{{n-1}\over2}}b_{\alpha}t^{|\alpha|}\int_{|z|=1}D^{\alpha}h(x+tz)z^{\alpha}dS_z,
$$
for odd $n,$ and
$$
(w\ast
h)(x,t)=\sum\limits_{0\leq|\alpha|\leq{{n-2}\over2}}a_{\alpha}t^{|\alpha|+1}\int_{|z|\leq1}{{D^{\alpha}h(x+tz)z^{\alpha}}
\over{\sqrt{1-|z|^2}}}dz,
$$
$$
(w_t\ast
h)(x,t)=\sum\limits_{0\leq|\alpha|\leq{{n}\over2}}b_{\alpha}t^{|\alpha|}\int_{|z|\leq1}{{D^{\alpha}h(x+tz)z^{\alpha}}
\over{\sqrt{1-|z|^2}}}dz,
$$
for even $n.$ Here $dS_z$ denotes surface measure on the unit sphere
in ${\mathbb{R}}^n.$
 \el

By denoting $\lambda_{\eta}=\sqrt{\eta-4}$ and then taking the
Taylor expansion for $\lambda_{\eta}$ in $\eta$, we have that
$$
\lambda_{\eta}=2\sqrt{-1}+\sum\limits_{j=1}\limits^{m-1}a_j\eta^j+O(\eta^m).
$$
Since $$
\mu_{\pm}(\xi)={{-1\pm\sqrt{1-4|\xi|^2}}\over{2}}={1\over2}(-1\pm|\xi|\sqrt{|\xi|^{-2}-4}),
$$
we have that, when $\xi$ is sufficiently large,
$$
\mu_{\pm}(\xi)={1\over2}(-1\pm2\sqrt{-1}|\xi|\pm(\sum\limits_{j=1}\limits^{m-1}a_j|\xi|^{1-2j}))+O(|\xi|^{1-2m}).
$$
$$
\mu_0^{-1}(\xi)={1\over{\sqrt{1-4|\xi|^2}}}=|\xi|^{-1}(-{{\sqrt{-1}}\over2}+O(|\xi|^{-2})).
$$
This implies that
$$\begin{array}{ll}
e^{\mu_{\pm}(\xi)t}&=e^{-t/2}e^{\pm
\sqrt{-1}|\xi|t}(1+(\sum\limits_{j=1}\limits^{m-1}(\pm
a_j)|\xi|^{1-2j})t+\cdots\\&+{1\over{m!}}(\sum\limits_{j=1}\limits^{m-1}(\pm
a_j)|\xi|^{1-2j})^mt^m+R^{\pm}(\xi,t)),\end{array}
$$
where $R^{\pm}(\xi,t)\leq (1+t)^{m+1}(1+|\xi|)^{1-2m}.$\\\\
Denote
$$
\hat{w}(\xi,t)=(2\pi)^{-n/2}\sin(|\xi|t)/{|\xi|},~~\hat{w}_t=(2\pi)^{-n/2}\cos(|\xi|t).
$$
Since $$
 \partial_t^h\hat{G}^+(\xi,t)={{(\mu_+(\xi))^h}\over{\mu_0}}e^{\mu_+(\xi)t},~~
\partial_t^h\hat{G}^-(\xi,t)=-{{(\mu_-(\xi))^h}\over{\mu_0}}e^{\mu_-(\xi)t}.
$$
By a direct and a little tedious calculation we get that,
$$
\begin{array}{ll}\partial_t^h\hat{G}_3(\xi,t)&=e^{-t/2}\hat{w}_t(\sum\limits_{j=0}
\limits^{h-1}p^1_{1j}(t)q^1_{1j}(\xi
)+\sum\limits_{j=1}\limits^{2m-2}p^1_{2j}(t)q^1_{2j}(\xi)+\hat{R}^1(\xi,t))\\&\quad
\ \ +
e^{-t/2}\hat{w}(\sum\limits_{j=0}\limits^{h}p^2_{1j}(t)q^2_{1j}(\xi
)+\sum\limits_{j=1}\limits^{2m-2}p^2_{2j}(t)q^2_{2j}(\xi)+\hat{R}^2(\xi,t)),\end{array}
$$
here
$$
\begin{array}{ll} &p_{1j}^1(t)\leq
C(1+t)^{h-1-j},~~q^1_{1j}(\xi)=\chi_3(\xi)|\xi|^j,~~0\leq j\leq
h-1;\\&\\&p_{2j}^1(t)\leq
C(1+t)^{h+j},~~q^1_{2j}(\xi)=\chi_3(\xi)|\xi|^{-j},~~1\leq j\leq
2m-2;\\&\\&p_{1j}^2(t)\leq
C(1+t)^{h-j},~~q^2_{1j}(\xi)=\chi_3(\xi)|\xi|^j,~~0\leq j\leq
h;\\&\\&p_{2j}^2(t)\leq
C(1+t)^{h+j},~~q^2_{2j}(\xi)=\chi_3(\xi)|\xi|^{-j},~~1\leq j\leq
2m-2;\\&\\&|\hat{R}^1(\xi,t)|,~~|\hat{R}^2(\xi,t)|\leq
C(1+t)^{m+1}(1+|\xi|)^{h+1-2m}.
  \end{array}
$$
In the following we denote $q^1_{1j}(D_x)(0\leq j\leq
h-1),~q^2_{1j}(D_x)(0\leq j\leq h),~q^1_{2j}(D_x)(0\leq j\leq
2m-2),~q^2_{2j}(D_x)(0\leq j\leq 2m-2),$
$w(D_x,t),~w_t(D_x,t),\\~R^1(D_x,t),~R^2(D_x,t)$ the
pseudo-differential operators with symbols $q^1_{1j}(\xi)(0\leq
j\leq h-1),~q^2_{1j}(\xi)(0\leq j\leq h),~q^1_{2j}(\xi)(0\leq j\leq
2m-2),~q^2_{2j}(\xi)(0\leq j\leq
2m-2),~\hat{w}(\xi,t),~\hat{w}_t(\xi,t),~\hat{R}^1(\xi,t),~\hat{R}^2(\xi,t)$
respectively. It is easy to get that, for any multi-indexes
$\beta,~|\beta|\geq1$,
$$
\begin{array}{ll} &|D_{\xi}^{\beta}q^1_{2j}(\xi)|\leq
C|\xi|^{-1-|\beta|},~~|D_{\xi}^{\beta}q^2_{2j}(\xi)|\leq
C|\xi|^{-1-|\beta|},~~1\leq j\leq 2m-2;\\&\\&
|D_{\xi}^{\beta}\chi_3(\xi)|\leq
C|\xi|^{-1-|\beta|},~~|D_{\xi}^{\beta}(|\xi|^{-1}\chi_3(\xi))|\leq
C|\xi|^{-1-|\beta|};\end{array}$$
$$\begin{array}{ll}&{\rm supp}~
q^1_{1j}(0\leq j\leq h-1),~{\rm supp}~ q^2_{1j}(0\leq j\leq
h)\subset O_{R-1}=\{\xi;~|\xi|>R-1\};
\\&\\&{\rm supp}~ q^1_{2j}(1\leq j\leq 2m-2),~{\rm
supp}~ q^2_{2j}(1\leq j\leq 2m-2)\subset O_{R-1}.
 \end{array}
$$
By Lemma \ref{23} we have the following lemma.

 \bl\label{28}
For $R$ being sufficiently large, there exist distributions
$\bar{q}^1_{1j}(x),\\~\tilde{q}^1_{1j}(x),~0\leq j\leq h-1;$
$\bar{q}^2_{1j}(x),~\tilde{q}^2_{1j}(x),~0\leq j\leq h;$ $
\bar{q}^1_{2j}(x),~\tilde{q}^1_{2j}(x),~1\leq j\leq 2m-2;$ $
\bar{q}^2_{2j}(x),~\tilde{q}^2_{2j}(x),~1\leq j\leq 2m-2$ and
constant $C_0$ such that
$$
\begin{array}{ll}
&q^1_{1j}(D_x)\delta(x)=(-\triangle)^{{{j}\over2}}(\bar{q}^1_{1j}+\tilde{q}^1_{1j}+C_0\delta(x)),~0\leq
j\leq
h-1;\\&\\&q^2_{1j}(D_x)\delta(x)=(-\triangle)^{{{j}\over2}}(\bar{q}^2_{1j}+\tilde{q}^2_{1j}+C_0\delta(x)),~0\leq
j\leq
h;\\&\\&q^1_{2j}(D_x)\delta(x)=\bar{q}^1_{2j}+\tilde{q}^1_{2j}+C_0\delta(x),~1\leq
j\leq
2m-2;\\&\\&q^2_{2j}(D_x)\delta(x)=\bar{q}^2_{2j}+\tilde{q}^2_{2j}+C_0\delta(x),~1\leq
j\leq 2m-2;
 \end{array}
$$
and $$ \begin{array}{ll} &|D_x^{\alpha}\bar{q}^1_{1j}|(0\leq j\leq
h-1),~|D_x^{\alpha}\bar{q}^2_{1j}|(0\leq j\leq h)\leq
C(1+|x|^2)^{-N};\\&\\&|D_x^{\alpha}\bar{q}^1_{2j}|(1\leq j\leq
2m-2),~|D_x^{\alpha}\bar{q}^2_{2j}|(1\leq j\leq 2m-2)\leq
C(1+|x|^2)^{-N};\\&\\& \|\tilde{q}^1_{1j}\|_{L^1}(0\leq j\leq
h-1),~\|\tilde{q}^2_{1j}\|_{L^1}(0\leq j\leq h)\leq C;\\&\\&
\|\tilde{q}^1_{2j}\|_{L^1}(1\leq j\leq
2m-2),~\|\tilde{q}^2_{2j}\|_{L^1}(1\leq j\leq 2m-2)\leq C;
\\&\\&{\rm supp}~ \tilde{q}^1_{1j}(0\leq j\leq h-1),~{\rm
supp}~ \tilde{q}^2_{1j}(0\leq j\leq h)\subset
\{x;~|x|<2\varepsilon_1\};
\\&\\&{\rm supp}~ \tilde{q}^1_{2j}(1\leq j\leq 2m-2),~{\rm
supp}~ \tilde{q}^2_{2j}(1\leq j\leq 2m-2)\subset
\{x;~|x|<2\varepsilon_1\},
 \end{array}
$$
with $\varepsilon_1$ being sufficiently small.
 \el

Let
$$\begin{array}{ll}
&Q^1_{1j}(x)=\tilde{q}^1_{1j}(x)+C_0\delta(x),~0\leq j\leq
h-1;\\&\\&Q^2_{1j}(x)=\tilde{q}^2_{1j}(x)+C_0\delta(x),~0\leq j\leq
h;\end{array}$$
$$\begin{array}{ll}&Q^1_{2j}(x)=\tilde{q}^1_{2j}(x)+C_0\delta(x),~1\leq
j\leq 2m-2;\\&\\&Q^2_{2j}(x)=\tilde{q}^2_{2j}(x)+C_0\delta(x),~1\leq
j\leq 2m-2,
 \end{array}
$$
and $$ \begin{array}{ll} &
L^1_{1j}(x,t)=p^1_{1j}(t)w_t(D_x,t)(-\triangle)^{{{j}\over2}}Q^1_{1j}(x),~0\leq
j\leq
h-1;\\&\\&L^2_{1j}(x,t)=p^2_{1j}(t)w(D_x,t)(-\triangle)^{{{j}\over2}}Q^2_{1j}(x),~0\leq
j\leq
h;\\&\\&L_{2j}(x,t)=p^1_{2j}(t)w_t(D_x,t)Q^1_{2j}(x)+p^2_{2j}(t)w(D_x,t)Q^2_{2j}(x),~1\leq
j\leq 2m-2,
 \end{array}
$$
we have the following proposition.

 \bp\label{29} For $R$
sufficiently large, there exists distribution
$$
K^h_m(x,t)=e^{-t/2}(\sum\limits_{j=0}\limits^{h-1}L^1_{1j}(x,t)+\sum\limits_{j=0}\limits^{h}L^2_{1j}(x,t)
+\sum\limits_{j=1}\limits^{2m-2}L_{2j}(x,t))
$$ such that for $m\geq[{{|\alpha|+n+h+3}\over2}],$ we have that
$$
|D_x^{\alpha}(\partial_t^hG_3-K^h_m)(x,t)|\leq Ce^{-t/4}B_N(|x|,t).
$$
 \ep

The proof of Proposition \ref{29} can be seen in \cite{LW}.

 By Proposition \ref{21}, Proposition \ref{22} and Proposition \ref{29},
 we have the following proposition on the Green function.

 \bp\label{210}
For any integer $h\geq0$, any multi-index $\alpha$, and
$m\geq[{{|\alpha|+n+h+3}\over2}],$ we have that
$$
|D_x^{\alpha}(\partial_t^hG-K^h_m)(x,t)|\leq
C(1+t)^{-(n+|\alpha|+2h)/2}B_N(|x|,t),
$$
where $N>n$ can be big enough.
 \ep

\section{Pointwise estimates}

In this section, we aim at verifying that the solution obtained in
Theorem \ref{ge} satisfies the pointwise decay estimates expressed in Theorem \ref{pe}.

 By Duhamel's principle, the solution to \eqref{1a}\eqref{IC} can be expressed as following,
$$\begin{array}{ll}
u(x,t)=&G(x-\cdot,t)\ast (u_0+u_1)(\cdot)+\partial_t
G(x-\cdot,t)\ast
u_0(\cdot)\\&\\&-\int^t_0G(x-\cdot,t-\tau)\ast(|u|^{\theta}u)(\cdot,\tau)d\tau.\end{array}
$$

 We denote the solution
to the corresponding linear dissipative wave equation as $\bar{u}$,
then $$ \bar{u}(x,t):=G(x-\cdot,t)\ast (u_0+u_1)(\cdot)+\partial_t
G(x-\cdot,t)\ast u_0(\cdot).
$$
 Denote $$
\tilde{u}(x,t):=\int^t_0G(x-\cdot,t-\tau)\ast(|u|^{\theta}u)(\cdot,\tau)d\tau,
 $$ then the solution $u$ to (\ref{1a}) can be expressed as:
 $u=\bar{u}-\tilde{u}$.

In \cite{LW}, the following pointwise estimate of the solution $\bar {u}$ to the linear problem is obtained. 
\begin{thm}$^{\cite{LW}}$\label{lr}
 Assume that $(u_0, u_1)\in H^{s+1}\times H^{s}, s>n$
 is an integer, and for any multi-index
$\alpha\in \mathbb{Z}^n,~|\alpha|< s-{n\over2}$, there exists
$r>{n\over2}$ such that
$|D^{\alpha}_xu_0(x)|+|D^{\alpha}_xu_1(x)|\leq C(1+|x|^2)^{-r},$
then the solution $\bar{u}$ satisfies, for
$|\alpha|+h<s-n,$
\begin{equation}\label{lr1}
|\partial_t^hD_x^{\alpha}\bar{u}(x,t)|\leq
C(1+t)^{-(n+|\alpha|+2h)/2}(1+{{|x|^2}\over{1+t}})^{-r}. \end{equation}
\end{thm}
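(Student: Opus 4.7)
The plan is to use the Green function representation
\[
\bar{u}(x,t)=G(\cdot,t)\ast(u_0+u_1)(x)+\partial_t G(\cdot,t)\ast u_0(x),
\]
so that for any multi-index $\alpha$ and any $h\geq 0$,
\[
\partial_t^h D_x^{\alpha}\bar{u}=(D_x^{\alpha}\partial_t^h G)\ast(u_0+u_1)+(D_x^{\alpha}\partial_t^{h+1}G)\ast u_0.
\]
The proof then reduces to estimating convolutions of the form $(D_x^{\alpha}\partial_t^h G)\ast\phi$ for $\phi\in\{u_0,u_1\}$. Using Proposition \ref{210}, I would decompose
\[
D_x^{\alpha}\partial_t^h G(x,t)=D_x^{\alpha}K^h_m(x,t)+R^h_{m,\alpha}(x,t),
\]
choosing $m\geq[(|\alpha|+n+h+3)/2]$ and $N\geq r$, so that $|R^h_{m,\alpha}(x,t)|\leq C(1+t)^{-(n+|\alpha|+2h)/2}B_N(|x|,t)$ while $K^h_m$ retains the factor $e^{-t/2}$. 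The contribution from the worst term, $(D_x^{\alpha}\partial_t^h G)\ast\phi$, will dictate the rate; the remaining piece involving $\partial_t^{h+1}G$ decays strictly faster.

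For the regular part $R^h_{m,\alpha}\ast\phi$, I would run the standard heat-profile convolution estimate: split $\mathbb{R}^n=\{|y|\leq|x|/2\}\cup\{|y|>|x|/2\}$. On the inner region $|x-y|\geq|x|/2$, so $B_N(|x-y|,t)\leq CB_N(|x|,t)\leq C(1+|x|^2/(1+t))^{-r}$ (using $N\geq r$), and $\int|\phi|\,dy<\infty$ thanks to $r>n/2$. On the outer region, $|\phi(y)|\leq C(1+|x|^2)^{-r}\leq C(1+|x|^2/(1+t))^{-r}$; here one further splits into $|y|>2|x|$ (where $|x-y|\gtrsim|y|$, so the kernel bound picks up the $B_N$ weight in $y$ and the integral converges after the change of variables $y=\sqrt{1+t}\,w$) and $|x|/2<|y|<2|x|$ (where $(1+|y|^2)^{-r}\sim(1+|x|^2)^{-r}$ factors out pointwise). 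Each piece contributes $C(1+t)^{-(n+|\alpha|+2h)/2}(1+|x|^2/(1+t))^{-r}$, which is the desired bound.

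For the wave part $D_x^{\alpha}K^h_m\ast\phi$, I would unpack the explicit structure from Proposition \ref{29} and Lemma \ref{28}, rewriting each summand as $p(t)\,w(D_x,t)(-\Delta)^{j/2}(Q\ast\phi)$ or the analogue with $w_t(D_x,t)$. Since $Q$ is the sum of a compactly supported $L^1$ distribution, a smooth rapidly decaying $\bar q$, and $C_0\delta$, the convolution $Q\ast\phi$ inherits polynomial decay from $\phi$, and the fractional Laplacian $(-\Delta)^{j/2}$ is absorbed using the pointwise decay hypothesis on $D_x^{\alpha}u_0$ and $D_x^{\alpha}u_1$ for $|\alpha|<s-n/2$. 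The Kirchhoff formulas in Lemma \ref{25} then represent $w(D_x,t)$, $w_t(D_x,t)$ as spherical or ball means of the resulting function, and Lemma \ref{24} turns such means of $B_N$-weighted data into $C(1+t)^{2N}B_N(|x|,t)$. Because $e^{-t/2}$ dominates $p(t)(1+t)^{2N}$, this produces a bound $Ce^{-t/4}B_N(|x|,t)$, which is far stronger than the target decay rate.

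The main technical obstacle is the convolution estimate for the regular part in the transition zone $|x|\sim\sqrt{1+t}$, where neither $|y|$ nor $|x-y|$ is a priori large: combining the heat-like kernel decay with the polynomial decay of the data so as to keep the full rate $(1+t)^{-(n+|\alpha|+2h)/2}$ intact requires the careful partition above, and genuinely uses both $r>n/2$ and the freedom to take $N$ arbitrarily large afforded by Proposition \ref{210}.
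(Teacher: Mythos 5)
First, a point of reference: the paper does not actually prove Theorem \ref{lr}; it imports it from \cite{LW}, and Section 4 (Propositions \ref{21}, \ref{22}, \ref{29}, \ref{210} and Lemmas \ref{23}--\ref{28}) reproduces exactly the toolkit that proof rests on. Your proposal reconstructs that argument along the same lines as the source: write $\partial_t^hD_x^{\alpha}\bar u$ as convolutions of $\partial_t^hG$ and $\partial_t^{h+1}G$ with the data, split each kernel via Proposition \ref{210} into the explicit hyperbolic part $K^h_m$ (handled through Lemma \ref{28}, the Kirchhoff formulas of Lemma \ref{25}, and the averaging Lemma \ref{24}, with $e^{-t/2}$ absorbing all polynomial losses) and a remainder with heat-kernel-type pointwise bounds, then run weighted convolution estimates against the $(1+|x|^2)^{-r}$ decay of the data. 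That is the right route and the one the paper relies on.

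The one step that does not close as written is the regular-part convolution on the annulus $|x|/2<|y|<2|x|$. There you factor out $(1+|y|^2)^{-r}\sim(1+|x|^2)^{-r}\le(1+{|x|^2\over 1+t})^{-r}$ and are left with $\int B_N(|x-y|,t)\,dy=C(1+t)^{n/2}$, which destroys the rate: you end up with $(1+t)^{-(|\alpha|+2h)/2}$ instead of $(1+t)^{-(n+|\alpha|+2h)/2}$. The missing ingredient is the dichotomy of Lemma \ref{41}, which is also how the paper treats the analogous terms $J_{21},J_{22}$ in Section 5: when $|x|^2\le 1+t$ the weight $(1+{|x|^2\over 1+t})^{-r}$ is bounded below by $2^{-r}$, so no spatial localization is needed and the $L^1$ bound $\int|\phi|\,dy<\infty$ (from $r>n/2$) suffices; when $|x|^2\ge 1+t$ one uses instead $(1+|x|^2)^{-r}\le C(1+t)^{-r}(1+{|x|^2\over 1+t})^{-r}$, and the extra factor $(1+t)^{-r}$ with $r>n/2$ absorbs the $(1+t)^{n/2}$ coming from the kernel integral. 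With that substitution your inner region and the region $|y|>2|x|$ go through as you describe, and your treatment of the $K^h_m$ part is consistent with Proposition \ref{29} and Lemma \ref{24}.
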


Now we give some lemmas which will be used later.

\bl\label{41}
Assume $n\geq1$, then the following inequalities hold,

(1). If $\tau\in[0, t]$, and $A^2\geq t$, then
$$
(1+{{A^2}\over{1+\tau}})^{-n}\leq
2^n({{1+\tau}\over{1+t}})^n(1+{{A^2}\over{1+t}})^{-n}.
$$

(2). If $A^2\leq t$, then $1\leq 2^n(1+{{A^2}\over{1+t}})^{-n}$.

 \el

 \bl\label{42}
Assume that $0\leq\tau\leq t$ and $h(x,\tau)$ satisfies $$
D^{\alpha}_xh(x, \tau)\leq C(1+\tau)^{-{{\theta
n+{|\alpha|}}\over2}}(1+{{|x|^2}\over{1+\tau}})^{-r},
$$ then
we have that,
$$\begin{array}{ll}
(1).& \int_{|z|=1}|D_x^{\alpha}h(x+tz, \tau)|dS_z\leq
C(1+\tau)^{-{{\theta
n+{|\alpha|}}\over2}}(1+t)^{2r}(1+{{|x|^2}\over{1+\tau}})^{-r}.
\\&\\(2).& \int_{|z|\leq1}{{|D_x^{\alpha}h(x+tz,
\tau)|}\over{\sqrt{1-|z|^2}}}dV_z\leq C(1+\tau)^{-{{\theta
n+{|\alpha|-1}}\over2}}(1+t)^{2r}(1+{{|x|^2}\over{1+\tau}})^{-r}.\end{array}
$$
 \el
 \begin{proof}  (1). By using Lemma $(\ref{24})_1$,
$$\begin{array}{ll}\int_{|z|=1}|D_x^{\alpha}h(x+tz,
\tau)|dS_z&\leq C\int^t_0(1+\tau)^{-{{\theta
n+{|\alpha|}}\over2}}(1+{{|x+tz|^2}\over{1+\tau}})^{-r}dS_z
\\&\leq C(1+\tau)^{-{{\theta
n+{|\alpha|}}\over2}}(1+t)^{2r}(1+{{|x|^2}\over{1+\tau}})^{-r}.\end{array}
$$
(2). By H$\ddot{o}$lder inequality and Lemma $(\ref{24})_2$,
 $$\begin{array}{ll}&\int_{|z|\leq1}{{|D_x^{\alpha}h(x+tz,
\tau)|}\over{\sqrt{1-|z|^2}}}dV_z\\&\leq
(\int_{|z|\leq1}|D_x^{\alpha}h(x+tz,
\tau)|^3dV_z)^{1\over3}(\int_{|z|\leq1}({1\over{\sqrt{1-|z|^2}}})^{3\over2}dV_z)^{2\over3}
\\&\leq C(1+\tau)^{-{{\theta
n+{|\alpha|-1}}\over2}}(\int_{|z|\leq1}{{(1+{{|x+tz|^2}\over{1+\tau}})^{-3r}}
\over{\sqrt{1+\tau}}}dV_z)^{1\over3}(\int^1_0(1-r^2)^{-{3\over4}}r^{n-1}dr)^{{2\over3}}
\\&\leq C(1+\tau)^{-{{\theta
n+{|\alpha|-1}}\over2}}(1+t)^{2r}(1+{{|x|^2}\over{1+\tau}})^{-r}.\end{array}
$$
 Thus we complete the proof of Lemma \ref{42}. 
\end{proof}
\begin{proof}[Proof of Theorem \ref{pe}]
 For $s>n$, denote 
$$
\begin{array}{ll}
&\varphi_{\alpha}(x,t):=(1+t)^{{n+|\alpha|}\over2}(B_r(|x|,
t))^{-1},\ \ r>{n\over2},\\&\\
  &M(T):=\sup\limits_{\begin{array}{ll}&(x,\tau)\in
\mathbb{R}^n\times
[0,T)\\&|\alpha|+h<s-n\end{array}}{|D_x^{\alpha}\partial_{\tau}^hu(x,\tau)|\varphi_{\alpha}(x,\tau)}.
\end{array}
$$
Now we come to make estimates to $\tilde{u}(x,t)$ under the assumption that $s>n$ and $\theta\geq2+[{1\over n}]$.

By induction argument, we obtain the following expression,
\begin{equation}\label{J}
\begin{array}{ll}
\p_t^h\tilde{u}(x,t)
&=\p_t^h\int^t_0G(t-\tau)\ast(|u|^{\theta}u)(\tau)d\tau\\
&=\sum\limits^{(h-1)_+}_{j=0}\p_t^jG(t)\ast
\p_t^{(h-1)_+-j}(|u|^{\theta}u)(0)+\int^t_0G(t-\tau)\ast\p_{\tau}^h(|u|^{\theta}u)(\tau)d\tau\\
&=: J_1+J_2,
\end{array}
\end{equation}
where $(h-1)_+=\max{\{h-1,0\}}$.

From \eqref{3h} and \eqref{3i}, we know that $\p_t^{(h-1)_+-j}(|u|^{\theta}u)(x,0)$ is
a polynomial with arguments $\Delta^iu_0(x)$ and $\Delta^ku_1(x)$, $0\leq i\leq [{(h-1)_+-j\over2}], \ 0\leq k\leq [{(h-1)_+-j-1\over2}].$
By using the similar estimates as that for $\bar{u}$ in
\cite{LW}, we obtain the following estimate for $D_x^{\alpha}J_1$,
 $$
|D_x^{\alpha}J_1|\leq CE_0(1+t)^{-{n+|\alpha|\over2}}
B_N(|x|,t).
$$
As for the estimate to $D_x^{\alpha}J_2$, we divide it
  as following ,
$$
\begin{array}{ll}
D_x^{\alpha}J_2
&=\int_0^{t\over2}\int_{\{y: |x|\leq2|y|\}}D_x^{\alpha}(G-K_m^0)(x-y,t-\tau)\partial_{\tau}^h
(|u|^{\theta}u)(y,\tau)dyd\tau
\\\\
&\quad +\int_0^{t\over2}\int_{\{y: |x|\geq2|y|\}}D_x^{\alpha}(G-K_m^0)(x-y,t-\tau)\p_{\tau}^h
(|u|^{\theta}u)(y,\tau)dyd\tau
\\\\
&\quad +\int^t_{t\over2}\int_{\{y: |x|\leq2|y|\}}(G-K_m^0)(x-y,t-\tau)
D_y^{\alpha}\p_{\tau}^h(|u|^{\theta}u)(y,\tau)dyd\tau
\\\\
&\quad +\int^t_{t\over2}\int_{\{y: |x|\geq2|y|\}}(G-K_m^0)(x-y,t-\tau)
D_y^{\alpha}\p_{\tau}^h
(|u|^{\theta}u)(y,\tau)dyd\tau
\\\\
&\quad +\int^{t}_0\int_{\mathbb{R}^n}K_m^0(x-y,t-\tau)D_y^{\alpha}\p_{\tau}^h
(|u|^{\theta}u)(y,\tau)dyd\tau\\\\
&=:J_{21}+J_{22}+J_{23}+J_{24}+J_{25}.
\end{array}
$$ 

 Next we estimate
$J_{2i}(i=1,2,3,4,5)$ respectively by using Proposition \ref{210} (with $N\geq r$) and  the fact that $B(|x|,t)\leq1$ and is an increasing function of $t$ and decreasing function of $|x|$ . 

By the definition of $M(T)$, we have that
$$
\begin{array}{ll}
|J_{21}|&\leq C \int_0^{t\over2}\int_{\{y: |x|\leq2|y|\}}
(1+t-\tau)^{-{{n+|\alpha|}\over2}}B_N(|x-y|,t-\tau)
\\&\qquad
M(T)^{\theta+1}(1+\tau)^{-{{(\theta+1)n+2h}\over2}}
B_r(|y|,\tau)dyd\tau.
\end{array}
$$
Now we estimate $J_{21}$ in two cases.

Case 1. $|x|^2\geq t$.  We have
$$
\begin{array}{ll}
|J_{21}|&\leq CM(T)^{\theta+1}\int_0^{t\over2}\int_{\{y: |x|\leq2|y|\}}
(1+t-\tau)^{-{{n+|\alpha|}\over2}}
\\&\qquad
B_N(|x-y|,t-\tau)(1+\tau)^{-{{(\theta+1)n}\over2}}
B_r(|x|,\tau)dyd\tau
\\&\leq
CM(T)^{\theta+1}B_r(|x|,t)\int_0^{t\over2}\int_{\{y: |x|\leq2|y|\}}
(1+t-\tau)^{-{{n+|\alpha|}\over2}}
\\&\qquad
B_N(|x-y|,t-\tau)(1+\tau)^{-{{(\theta+1)n}\over2}}({{1+\tau}\over{1+t}})^{r}
dyd\tau
\\&\leq CM(T)^{\theta+1}B_r(|x|,t)\int_0^{t\over2}
(1+t-\tau)^{-{{|\alpha|}\over2}}(1+\tau)^{-{{(\theta+1)
n}\over2}}({{1+\tau}\over{1+t}})^{n\over2} d\tau
\\&\leq
CM(T)^{\theta+1}B_r(|x|,t)(1+t)^{-{{ n+|\alpha|}\over2}},
\end{array}
$$
here in the second inequality we used  Lemma $\ref{41}$ (1).

Case 2. $|x|^2\leq t$.  We have
$$
\begin{array}{ll}
|J_{21}|&\leq  CM(T)^{\theta+1}\int_0^{t\over2}\int_{\{y: |x|\leq2|y|\}}
(1+t-\tau)^{-{{n+|\alpha|}\over2}}
\\&\qquad(1+\tau)^{-{{(\theta+1)n+2h}\over2}}
B_r(|y|,\tau)dyd\tau
\\
&\leq CM(T)^{\theta+1}\int_0^{t\over2}
(1+t-\tau)^{-{{n+|\alpha|}\over2}}(1+\tau)^{-{{\theta n+2h}\over2}}d\tau
\\&\leq CM(T)^{\theta+1}(1+t)^{-{{
n+|\alpha|}\over2}}
\\&\leq CM(T)^{\theta+1}B_r(|x|,t)(1+t)^{-{{
n+|\alpha|}\over2}},
\end{array}
$$
here in the last inequality we used  Lemma $\ref{41}$ (2).

Combining the two cases, we have that
$$
|J_{21}|\leq CM(T)^{\theta+1}(\varphi_{\alpha}(x,t))^{-1}.
$$
For $J_{22}$, we have
$$
\begin{array}{ll}
|J_{22}|&\leq C \int_0^{t\over2}\int_{\{y: |x|\geq2|y|\}}
(1+t-\tau)^{-{{n+|\alpha|}\over2}}B_N(|x-y|,t-\tau)
\\&\qquad
M(T)^{\theta+1}(1+\tau)^{-{{(\theta+1)n+2h}\over2}}
B_r(|y|,\tau)dyd\tau, 
\end{array}
$$
noticing that if $|x|\geq2|y|$, then $|x-y|\geq {|x|\over2}$, it yields that
$$
\begin{array}{ll}
|J_{22}| &\leq
CM(T)^{\theta+1}\int_0^{t\over2}\int_{\{y: |x|\geq2|y|\}}(1+t-\tau)^{-{{n+|\alpha|}\over2}}
\\&\qquad
B_N(|x|,t)(1+\tau)^{-{{(\theta+1)n+2h}\over2}}
B_r(|y|,\tau)dyd\tau
\\&\leq
CM(T)^{\theta+1}B_N(|x|,t)\int_0^{t\over2}
(1+t-\tau)^{-{{n+|\alpha|}\over2}}(1+\tau)^{-{{\theta n+2h}\over2}}d\tau
\\&\leq CM(T)^{\theta+1}B_N(|x|,t)(1+t)^{-{{
n+|\alpha|}\over2}}
\\&\leq CM(T)^{\theta+1}(\varphi_{\alpha}(x,t))^{-1}.
\end{array}
$$
For $J_{23}$, by using the monotonic properties of $B(|x|,t)$ with respect to $|x|$ and $t$, we have that
$$
\begin{array}{ll}
|J_{23}|&\leq C \int^t_{t\over2}\int_{\{y: |x|\leq2|y|\}}
(1+t-\tau)^{-{{n}\over2}}B_N(|x-y|,t-\tau)
\\&\qquad
M(T)^{\theta+1}(1+\tau)^{-{{(\theta+1)n+|\alpha|+2h}\over2}}B_r(|y|,\tau)dyd\tau
\\&\leq CM(T)^{\theta+1}\int^t_{t\over2}\int_{\{y: |x|\leq2|y|\}}
(1+t-\tau)^{-{{n}\over2}}
\\&\qquad
B_N(|x-y|,t-\tau)(1+\tau)^{-{{(\theta+1)n+|\alpha|+2h}\over2}}
B_r(|x|,t)dyd\tau
\\&\leq CM(T)^{\theta+1}B_r(|x|,t)\int^t_{t\over2}
(1+\tau)^{-{{(\theta+1) n+|\alpha|+2h}\over2}} d\tau
\\&\leq
CM(T)^{\theta+1}B_r(|x|,t)(1+t)^{-{{
n+|\alpha|}\over2}}
\\&\leq CM(T)^{\theta+1}(\varphi_{\alpha,
h}(x,t))^{-1}.
\end{array}
$$
For $J_{24}$,  similar to $J_{22}$ we have that
$$
\begin{array}{ll}
|J_{24}|&\leq C \int^t_{t\over2}\int_{\{y: |x|\geq2|y|\}}
(1+t-\tau)^{-{{n}\over2}}B_N(|x-y|,t-\tau)
\\&\qquad
M(T)^{\theta+1}(1+\tau)^{-{{(\theta+1)n+|\alpha|+2h}\over2}}B_r(|y|,\tau)dyd\tau
\\&\leq CM(T)^{\theta+1}\int^t_{t\over2}\int_{\{y: |x|\geq2|y|\}}
(1+t-\tau)^{-{{n}\over2}}
\\&\qquad
B_N(|x|,t)(1+\tau)^{-{{(\theta+1)n+|\alpha|+2h}\over2}}
B_r(|y|,\tau)dyd\tau
\\&\leq
CM(T)^{\theta+1}B_N(|x|,t)\int^t_{t\over2}
(1+t-\tau)^{-{{n}\over2}}(1+\tau)^{-{{\theta n+|\alpha|+2h}\over2}}
d\tau
\\&\leq CM(T)^{\theta+1}B_N(|x|,t)(1+t)^{-{{
n+|\alpha|+2h}\over2}}
\\&\leq CM(T)^{\theta+1} (\varphi_{\alpha}(x,t))^{-1}.
\end{array}
$$
Finally we come to estimate $J_{25}.$
From the definition of $K_m^0$ we have
\begin{equation}\label{j25}
\begin{array}{ll}
|J_{25}|&=|\int^t_0\int_{\mathbb{R}^n}e^{-{{t-\tau}\over2}}
p^2_{10}(t-\tau)w_t(D_x,t-\tau)(\tilde{q}^2_{10}+C_0\delta)(x-y)
\\&\quad+
\sum\limits^{2m-2}\limits_{j=1}[p^1_{2j}(t-\tau)w_t(D_x,t-\tau)(\tilde{q}^1_{2j}+C_0\delta)(x-y)
\\&\quad
 + p^2_{2j}(t-\tau)w(D_x,t-\tau)(\tilde{q}^2_{2j}+C_0\delta)(x-y)]
 \}D_y^{\alpha}\p_{\tau}^h(|u|^{\theta}u)(y,\tau)dyd\tau|
\\&
 \leq
|\int^t_0\int_{\mathbb{R}^n}e^{-{{t-\tau}\over2}
}p^2_{10}(t-\tau)w(D_x,t-\tau)\tilde{q}^2_{10}(x-y)
\\&\quad +
\sum\limits^{2m-2}\limits_{j=1}[p^1_{2j}(t-\tau)w_t(D_x,t-\tau)\tilde{q}^1_{2j}(x-y)
\\&\quad
 + p^2_{2j}(t-\tau)w(D_x,t-\tau)\tilde{q}^2_{2j}(x-y)]
 \}D_y^{\alpha}(|u|^{\theta}u)(y,\tau)dyd\tau|
\\&\quad+
|\int^t_{t\over2}\int_{\mathbb{R}^n}e^{-{{t-\tau}\over2
}}\{\sum\limits^{h-1}p^2_{10}(t-\tau)w(D_x,t-\tau)
\\&\quad+
\sum\limits^{2m-2}\limits_{j=1}[p^1_{2j}(t-\tau)w_t(D_x,t-\tau) + p^2_{2j}(t-\tau)w(D_x,t-\tau)] \}
\\&\qquad C_0\delta(x-y)D_y^{\alpha}(|u|^{\theta}u)(y,\tau)dyd\tau|
\\&=: K_1+K_2.
\end{array}
\end{equation}
 
By using Lemma \ref{25}, Lemma \ref{28}, Lemma \ref{42} and the fact that   $(1+{{|y|^2}\over{1+t}})^{-1}\leq C(1+{{|x|^2}\over{1+t}})^{-1},$ if
 $|x-y|\leq2\varepsilon_1$, $K_1$ can be estimated as follows,
\begin{equation}\label{k1}
\begin{array}{ll}
K_1&\leq
 C\int^t_0\int_{\mathbb{R}^n}e^{-
{{t-\tau}\over4}}
(\tilde{q}^2_{10}|+\sum\limits^{2m-2}\limits_{j=1}(|\tilde{q}^1_{2j}|+|\tilde{q}^2_{2j}|))(x-y)
\\&\qquad
M(T)^{\theta+1}(1+\tau)^{-{{(\theta+1)
n+|\alpha|}\over2}}(1+{{|y|^2}\over{1+\tau}})^{-r}dyd\tau
\\&\leq
CM(T)^{\theta+1}(1+t)^{-{{
n+|\alpha|}\over2}}(1+{{|x|^2}\over{1+t}})^{-r}.
\end{array}
\end{equation}
By using Lemma \ref{25} and Lemma \ref{42}, $K_2$ can be controlled by
\begin{equation}\label{k2}
\begin{array}{ll}
K_2&\leq C\int^t_0e^{-{{t-\tau}\over4}}
M(T)^{\theta+1}(1+\tau)^{-{{(\theta+1) n+|\alpha|
}\over2}}(1+{{|x|^2}\over{1+\tau}})^{-r}d\tau
\\&\leq
CM(T)^{\theta+1}(1+t)^{-{{
n+|\alpha|}\over2}}(1+{{|x|^2}\over{1+t}})^{-r}.
\end{array}
\end{equation}
\eqref{j25}, \eqref{k1} and \eqref{k2}  yield that 
$$
|J_{25}|\leq CM(T)^{\theta+1}(\varphi_{\alpha}(x,t))^{-1}.
$$
 Combining the estimates for $J_{2i}, i =1,2,3,4,5,$ we have that
\begin{equation}\label{pe2}
|D_x^{\alpha}\partial_t^h\tilde{u}(x,t)|\leq
CM(T)^{\theta+1}(\varphi_{\alpha}(x,t))^{-1}.
\end{equation}
\eqref{pe2} combined with \eqref{lr1} Theorem \ref{lr} yields that,
$$
M(T)\leq C(E_0+M(T)^{\theta+1}).
$$
 Since $\theta\geq2$,  we have that $M(T)\leq CE_0$ if $E_0$ is suitably small.  It yields that
$$
|D_x^{\alpha}\partial_t^hu(x,t)|\leq
CE_0(1+t)^{-{{n+|\alpha|}\over2}}B_r(|x|,t).
$$
Thus Theorem \ref{pe} is proved.
\end{proof}

\section*{Acknowledgement} This work was partially supported by Grant-in-Aid for JSPS Fellows. Also, the author would give thanks to professor  Weike Wang for the helpful discussion.

\end{document}